\documentclass{article}

\usepackage{fullpage}
\usepackage{verbatim}
\usepackage{graphics}
\usepackage[pdftex]{graphicx}
\usepackage{amsmath, amssymb, amsfonts, amsthm}
\usepackage{url}
\usepackage{setspace}
\usepackage{algorithmic}
\usepackage{enumitem}
\setlist{nolistsep}

\def \PG[#1,#2]{PG(#1,#2)}
\def \AG[#1,#2]{\mathbb{F}_{#2}^{#1}}

\newtheorem{theorem}{Theorem}

\newtheorem{lemma}[theorem]{Lemma}

\newtheorem{corollary}[theorem]{Corollary}

\DeclareMathOperator*{\argmax}{arg\,max}

\begin{document}

\title{Incidences and pairs of dot products\footnote{Work on this paper was supported by NSF grant CCF-1350572.}}
\author{Ben Lund  \\}
\maketitle

\begin{abstract}
Let $\mathbb{F}$ be a field, let $P \subseteq \mathbb{F}^d$ be a finite set of points, and let $\alpha,\beta \in \mathbb{F} \setminus \{0\}$.
We study the quantity
\[|\Pi_{\alpha, \beta}| = \{(p,q,r) \in P \times P \times P \mid p \cdot q = \alpha, p \cdot r = \beta \}.\]

We observe a connection between the question of placing an upper bound on $|\Pi_{\alpha,\beta}|$ and a well-studied question on the number of incidences betwen points and hyperplanes, and use this connection to prove new and strengthened upper bounds on $|\Pi_{\alpha,\beta}|$ in a variety of settings.
\end{abstract}

\section{Introduction}\label{sec:intro}

Let $\mathbb{F}$ be a field, let $P \subseteq \mathbb{F}^d$ be a finite set of points, and let $\alpha,\beta \in \mathbb{F} \setminus \{0\}$.
Denote
\[\Pi_{\alpha, \beta} = \Pi_{\alpha, \beta}(P) = \{(p,q,r) \in P \times P \times P \mid p \cdot q = \alpha, p \cdot r = \beta \}.\]

The quantity $\max_{P,\alpha,\beta:|P|=n}(|\Pi_{\alpha,\beta}|)$ was first investigated by Barker and Senger \cite{barker2015upper}, who gave upper bounds on $|\Pi_{\alpha,\beta}|$ in terms of $|P|$ for $P \subset \mathbb{R}^2$.
The case that $P$ is a sufficiently large subset of a vector space over a finite field, or of a module over the set of integers modulo the power of a prime, was investigated by Covert and Senger \cite{covert2015pairs}.

We observe that known upper bounds on the maximum number of incidences between a set of hyperplanes and points imply upper bounds on $|\Pi_{\alpha,\beta}|$.
We use this approach to obtain new upper bounds on the size of $|\Pi_{\alpha, \beta}|$ under various restrictions on $\mathbb{F}$ and $P$.
In the case $P \subset \mathbb{R}^2$, the new bounds strengthen and generalize the results of Barker and Senger.

Our first result shows that $|\Pi_{\alpha,\beta}| \leq O(n^2)$, and gives stronger bounds when no line contains too many points of $P$.

\begin{theorem}\label{thm:generalPlane}
Let $P \subseteq \mathbb{F}^2$ be a set of $n$ points such that no $s$ points of $P$ are collinear, and let $\alpha,\beta \in \mathbb{F} \setminus \{0\}$.
Then,
\[|\Pi_{\alpha,\beta}| < \min(2s^2n, 4n^2).\]
\end{theorem}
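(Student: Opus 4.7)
The plan is to realize the problem as a point–line incidence count, as hinted in the introduction. For each $p\in P$ and each $\gamma\in\{\alpha,\beta\}$, let $\ell_p^\gamma := \{x\in\mathbb{F}^2 : p\cdot x=\gamma\}$. Because $\alpha,\beta\ne 0$, the map $p\mapsto\ell_p^\gamma$ is injective (the normal direction of $\ell_p^\gamma$ recovers $p$), and $q\in\ell_p^\alpha$ iff $p\cdot q=\alpha$, $r\in\ell_p^\beta$ iff $p\cdot r=\beta$. Therefore
\[
|\Pi_{\alpha,\beta}| \;=\; \sum_{p\in P} |\ell_p^\alpha\cap P|\cdot|\ell_p^\beta\cap P|,
\]
and both bounds will come from estimates of this sum.

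For the $2s^2 n$ bound I would apply the no-$s$-collinear hypothesis term-by-term: each $|\ell_p^\gamma\cap P|$ is at most $s-1$, so each summand is at most $(s-1)^2<s^2$ and the total is below $s^2 n \le 2s^2 n$.

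For the $4n^2$ bound I would first pass to a sum of squares via AM–GM,
\[
|\Pi_{\alpha,\beta}| \;\le\; \tfrac12\sum_{p\in P}|\ell_p^\alpha\cap P|^2 \;+\; \tfrac12\sum_{p\in P}|\ell_p^\beta\cap P|^2,
\]
and then bound each sum of squares by expanding it as the triple count
\[
\sum_{p\in P} |\ell_p^\gamma\cap P|^2 \;=\; |\{(p,q,q')\in P^3 : p\cdot q=p\cdot q'=\gamma\}|
\]
and splitting on whether $q=q'$. The diagonal contribution $\sum_q |\{p\in P : p\cdot q=\gamma\}|$ is at most $n^2$ from the trivial per-line bound. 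For $q\ne q'$, the condition $\gamma\ne 0$ forces $q,q'$ to be linearly independent (otherwise $q'=\lambda q$ with $\lambda\ne 1$ gives $\gamma=\lambda\gamma$, a contradiction), in which case the linear system in $p$ has a unique solution; this contributes at most $n(n-1)$. Consequently each sum of squares is below $2n^2$, and $|\Pi_{\alpha,\beta}|<2n^2<4n^2$.

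I do not anticipate a real obstacle; both estimates are essentially one-line calculations once the incidence reformulation is in hand. The only point requiring care is using $\alpha,\beta\ne 0$ consistently — it is needed to make $p\mapsto\ell_p^\gamma$ injective (so the reduction is clean) and, more importantly, to rule out the degenerate linearly-dependent case in the off-diagonal count, which would otherwise contribute a whole line's worth of $p$'s per pair $(q,q')$ and destroy the second bound.
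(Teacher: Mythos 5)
Your proof is correct and follows essentially the same route as the paper: reduce $|\Pi_{\alpha,\beta}|$ to $\sum_{p}wt(\ell_p^\alpha)\,wt(\ell_p^\beta)$, get the first bound by applying the no-$s$-collinear hypothesis termwise, and get the second by AM--GM followed by the observation that each ordered pair of distinct points determines at most one $p$ (the paper packages this same double count through its general $g_k$ machinery in Section 3.1). Your inlined version is cleaner for this special case and in fact yields the sharper constants $s^2n$ and $2n^2$.
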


It is possible that $|\Pi_{\alpha,\beta}| \geq \Omega(n^2)$.
For example, consider the set of points $P = p \cup P' \subset \mathbb{R}^2$, where $p$ has the coordinates $(1,1)$ and $P'$ is contained in the line $x+y = 1$.
Then, if $q,r \in P'$, we have $(p,q,r) \in \Pi_{1,1}$.
Hence, $|\Pi_{1,1}| \geq |P'|^2$.

Barker and Senger \cite{barker2015upper} showed that $|\Pi_{\alpha,\beta}| \leq O(n^2)$ when $P \subset \mathbb{R}^2$, and Covert and Senger \cite{covert2015pairs} showed that $|\Pi_{\alpha,\beta}| \leq O(n^2)$ when $P \subset \mathbb{F}_q^2$.
The observation that we can obtain a better bound if $P$ contains no large collinear set is new.

In the case that $P \subset \mathbb{R}^2$ or $P \subset \mathbb{C}^2$, we can use the Szemer\'edi-Trotter theorem \cite{szemeredi1983extremal, toth2015szemeredi, zahlszemeredi} to get an improvement to the conclusion of Theorem \ref{thm:generalPlane} for $s\geq \Omega(n^{1/3})$.

\begin{theorem}\label{thm:R2}
Let $P$ be a set of $n$ points in $\mathbb{F}^2$, for either $\mathbb{F} = \mathbb{R}$ or $\mathbb{F} = \mathbb{C}$.
Suppose that no $s$ points of $P$ are collinear, and let $\alpha,\beta \in \mathbb{F} \setminus \{0\}$.
Then, 
\[|\Pi_{\alpha,\beta}| \leq O(n^{5/3} + sn)\]
\end{theorem}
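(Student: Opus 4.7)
The plan is to rewrite $|\Pi_{\alpha,\beta}|$ as a sum of incidence counts and then apply the Szemer\'edi--Trotter theorem (or its complex analog, due to T\'oth and Zahl, when $\mathbb{F} = \mathbb{C}$) to bound that sum. For each $p \in P$, let $L_p^\alpha := \{x \in \mathbb{F}^2 : p \cdot x = \alpha\}$, which is a line since $\alpha \neq 0$ forces $p \neq 0$; the points $q$ with $p \cdot q = \alpha$ are exactly those in $P \cap L_p^\alpha$. Writing $a_p = |P \cap L_p^\alpha|$ and $b_p = |P \cap L_p^\beta|$, we have
\[
|\Pi_{\alpha,\beta}| = \sum_{p \in P} a_p b_p \leq \tfrac{1}{2}\Bigl(\sum_{p \in P} a_p^2 + \sum_{p \in P} b_p^2\Bigr),
\]
so it suffices to prove $\sum_p a_p^2 = O(n^{5/3} + sn)$ (and symmetrically for $b$).

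Since $\alpha \neq 0$, the map $p \mapsto L_p^\alpha$ is injective: two distinct $p,p'$ defining the same line would have to satisfy $p' = \lambda p$ with $\lambda \alpha = \alpha$, forcing $\lambda = 1$. So $\{L_p^\alpha\}_{p \in P}$ is a family of $n$ distinct lines. Letting $N(t) = |\{p \in P : a_p \geq t\}|$, the dual form of Szemer\'edi--Trotter together with the trivial bound and the collinearity hypothesis gives
\[
N(t) \leq \min\bigl(n,\ C(n^2/t^3 + n/t)\bigr), \qquad N(t) = 0 \text{ for } t \geq s,
\]
for some absolute constant $C$. Using the identity $\sum_p a_p^2 = \sum_{t \geq 1}(2t-1)N(t) \leq 2\sum_{t=1}^{s-1} t\,N(t)$ and splitting at $t_0 = n^{1/3}$, the range $t \leq t_0$ contributes $O(n \cdot t_0^2) = O(n^{5/3})$ via the bound $N(t) \leq n$, and the range $t_0 < t < s$ contributes $\sum O(n^2/t^2 + n) = O(n^{5/3} + sn)$ via the bound $tN(t) \leq Cn^2/t^2 + Cn$. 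Summing yields the desired $O(n^{5/3} + sn)$.

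The main obstacle to watch is not to lose the bound at the very first step: the naive inequality $|\Pi_{\alpha,\beta}| \leq s\sum_p a_p \leq O(sn^{4/3})$ (using Szemer\'edi--Trotter for total incidences) is only sharp for $s = O(n^{1/3})$ and is strictly weaker than the target when $s$ is larger. The AM--GM reduction $a_p b_p \leq \tfrac{1}{2}(a_p^2 + b_p^2)$ replaces the weak hypothesis $b_p < s$ with a second use of the high-multiplicity tail bound $N(t) \ll n^2/t^3$, and it is exactly this second application that produces the improvement from $sn^{4/3}$ to $n^{5/3} + sn$.
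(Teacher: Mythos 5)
Your proof is correct and takes essentially the same route as the paper: reduce $|\Pi_{\alpha,\beta}|$ to $\sum_p a_p^2$ via AM--GM, observe that the $n$ lines $L_p^\alpha$ are distinct, and bound the level sets $N(t)$ by the $t$-rich-lines corollary of Szemer\'edi--Trotter (T\'oth/Zahl over $\mathbb{C}$) with a split at $t=n^{1/3}$. The only differences are cosmetic: you use the identity $\sum_p a_p^2=\sum_{t\ge 1}(2t-1)N(t)$ where the paper runs the same summation by parts through a monotone majorant $g_k$ (Lemma \ref{thm:k-rich}), and the aside ``$\alpha\neq 0$ forces $p\neq 0$'' should instead say that $p=0$ gives $L_p^\alpha=\emptyset$ and hence contributes nothing.
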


In Section \ref{sec:constructions}, we describe a very simple construction that achieves $|\Pi_{\alpha,\beta}| = \Omega(sn)$ for any $s,n$.
This shows that Theorem \ref{thm:R2} is tight up to constant factors when $s \geq n^{2/3}$.
It is an open problem to determine the asymptotically least possible upper bound on $|\Pi_{\alpha,\beta}|$ for $s \leq n^{2/3}$, under the hypotheses of Theorem \ref{thm:R2}.

It's worth noting that the conclusion of Theorem \ref{thm:R2} does not hold for subsets of $\mathbb{F}_q^2$.
Covert and Senger \cite{covert2015pairs} showed that, for $P \subseteq \mathbb{F}_q^2$ with $n = |P| \geq \Omega_\epsilon(q^{3/2+\epsilon})$ for some $\epsilon > 0$ \footnote{The subscript in the notation $O_\epsilon$ indicates that the constants hidden in the $O$-notation depend on $\epsilon$.}, we have $|\Pi_{\alpha,\beta}| = (1-o(1))(n^3q^{-2})$.
Hence, in this regime, we have $|\Pi_{\alpha,\beta}| > \omega(nq^{3/2}) > \omega(n^{5/3}) > \min(2q^2n, O(n^{5/3} + qn) = O(n^{5/3})$.

As a corollary of Theorem \ref{thm:R2}, we obtain the following improvement to a result of Barker and Senger.

\begin{corollary}\label{thm:density}
Let $P \subset [0,1]^2$ with $|P|=n$, such that the distance between each pair of points in $P$ is at least $\epsilon$.
Then, $|\Pi_{\alpha,\beta}| \leq O(n^{5/3}+n \epsilon^{-1})$.
\end{corollary}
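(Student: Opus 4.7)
The plan is to derive this as a direct corollary of Theorem \ref{thm:R2} by bounding the maximum number of collinear points of $P$ in terms of $\epsilon$.

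First I would observe that the hypothesis places strong geometric restrictions on any collinear subset of $P$. Since $P \subset [0,1]^2$, any line $\ell$ meets the unit square in a segment of length at most $\sqrt{2}$. Moreover, the points of $P \cap \ell$ have pairwise distances at least $\epsilon$, so the distance between consecutive points on $\ell$ is at least $\epsilon$. A pigeonhole (or direct counting) argument then shows that $|P \cap \ell| \leq \sqrt{2}/\epsilon + 1 = O(\epsilon^{-1})$. Thus we may take $s = O(\epsilon^{-1})$ in the hypothesis of Theorem \ref{thm:R2}.

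Next I would apply Theorem \ref{thm:R2} (with $\mathbb{F} = \mathbb{R}$) to $P$ with this value of $s$. This immediately yields
\[
|\Pi_{\alpha,\beta}| \leq O(n^{5/3} + sn) = O(n^{5/3} + n\epsilon^{-1}),
\]
which is the desired conclusion.

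There is no real obstacle: the argument is essentially a single-line deduction from Theorem \ref{thm:R2} once one has translated the $\epsilon$-separation and boundedness hypotheses into a collinearity bound. The only point warranting explicit mention is the elementary fact that an $\epsilon$-separated subset of a segment of length $L$ has cardinality at most $L/\epsilon + 1$, applied to the intersection of an arbitrary line with $[0,1]^2$.
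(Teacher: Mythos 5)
Your proof is correct and follows essentially the same route as the paper: bound the number of collinear points by $O(\epsilon^{-1})$ using the $\epsilon$-separation and the fact that a line meets $[0,1]^2$ in a segment of length at most $\sqrt{2}$, then invoke Theorem \ref{thm:R2}. The paper adds only the cosmetic remark that $\epsilon$ may be assumed small enough that $n$ points can actually be packed, which your argument does not need.
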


\begin{proof}
We can assume that $\epsilon \leq O(n^{1/2})$, since otherwise $n$ points cannot be placed in $[0,1]^2$ such that the distance between each pair of points is at least $\epsilon$.
Since each pair of points is at distance at least $\epsilon$, and since the maximum distance between any pair of points is at most $\sqrt{2}$, there are at most $O(\epsilon^{-1})$ points on any line.
An application of Theorem \ref{thm:R2} completes the proof.
\end{proof}

Under the hypotheses of Corollary \ref{thm:density}, Barker and Senger showed that $|\Pi_{\alpha,\beta}| \leq O(n^{4/3}\epsilon^{-1}\log(\epsilon^{-1}))$.
In Section \ref{sec:constructions}, we give a construction showing that Corollary \ref{thm:density} is tight up to constant factors when $\epsilon \leq O(n^{1/3})$.

When $P \subset \mathbb{F}_q^2$ for $p$ prime, and $|P|$ is not too large, we obtain a slight improvement to Theorem \ref{thm:generalPlane} when $s$ is close to $n^{1/2}$ by using an incidence bound first proved by Bourgain, Katz, and Tao \cite{bourgain2004sum, jones2012further, roche2014new}.

\begin{theorem}\label{thm:Fp2}
Let $P$ be a set of $n<p$ points in $\mathbb{F}_p^2$, for a prime $p$, such that no $s$ points of $P$ are collinear.
 Then, there exists a constant $\epsilon > 0$ such that
\[|\Pi_{\alpha,\beta}| \leq O( s n^{3/2 - \epsilon}).\]
\end{theorem}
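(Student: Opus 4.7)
The plan is to mirror the approach used for Theorems \ref{thm:generalPlane} and \ref{thm:R2}: reduce $|\Pi_{\alpha,\beta}|$ to a point--line incidence count in $\mathbb{F}_p^2$, and then invoke the Bourgain--Katz--Tao incidence bound (and its refinements) in place of Szemerédi--Trotter.

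First, for each nonzero $v \in P$ and each $\gamma \in \{\alpha,\beta\}$, observe that the locus $\ell_v^\gamma = \{w \in \mathbb{F}_p^2 : v \cdot w = \gamma\}$ is an affine line. Partitioning the triples in $\Pi_{\alpha,\beta}$ by their first coordinate yields
\[|\Pi_{\alpha,\beta}| = \sum_{v \in P} |P \cap \ell_v^\alpha| \cdot |P \cap \ell_v^\beta|.\]
Since no $s$ points of $P$ are collinear, $|P \cap \ell_v^\alpha| < s$ for every $v$, and therefore
\[|\Pi_{\alpha,\beta}| < s \sum_{v \in P} |P \cap \ell_v^\beta| = s \cdot I(P, L_\beta),\]
where $L_\beta = \{\ell_v^\beta : v \in P,\ v \neq 0\}$ and $I(\cdot,\cdot)$ denotes the number of point--line incidences.

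Next I would verify that the map $v \mapsto \ell_v^\beta$ is injective on $P \setminus \{0\}$, so that $|L_\beta| \leq n$. If $\ell_v^\beta = \ell_{v'}^\beta$, the two defining linear forms must agree up to a scalar, giving $v' = \lambda v$ and $\beta = \lambda \beta$; since $\beta \neq 0$, this forces $\lambda = 1$ and $v = v'$. With $|P|, |L_\beta| \leq n < p$, the Bourgain--Katz--Tao incidence theorem \cite{bourgain2004sum}, as sharpened in \cite{jones2012further, roche2014new}, supplies an absolute constant $\epsilon > 0$ for which $I(P, L_\beta) \leq O(n^{3/2-\epsilon})$. Substituting gives $|\Pi_{\alpha,\beta}| \leq O(s\, n^{3/2-\epsilon})$.

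Since the Bourgain--Katz--Tao bound is invoked as a black box, the argument has no genuine obstacle; the only subtle point is the injectivity of $v \mapsto \ell_v^\beta$, which essentially uses the hypothesis $\beta \neq 0$ and ensures that the incidence bound is applied to $n$ lines (rather than fewer) against $n$ points.
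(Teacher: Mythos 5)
Your proof is correct. It rests on the same external ingredient as the paper --- the Bourgain--Katz--Tao incidence bound (Lemma \ref{thm:BKT}) --- but you organize the reduction to incidences differently. The paper bounds $wt(h_\alpha(p))\,wt(h_\beta(p)) < wt(h_\alpha(p))^2 + wt(h_\beta(p))^2$ and then runs its general level-set machinery (the functions $f_{=k}$, $g_k$, and Lemma \ref{thm:k-rich}), extracting from Lemma \ref{thm:BKT} the bound $k g_k \leq n^{3/2-\epsilon}$ and summing over $k \leq s$. You instead bound one factor trivially by $s$ via the collinearity hypothesis and recognize the sum of the remaining factors as a single incidence count $I(P,L_\beta)$ between $n$ points and at most $n$ lines, to which Lemma \ref{thm:BKT} applies directly. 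For this particular theorem your route is shorter and loses nothing: the BKT bound $I \leq O(N^{3/2-\epsilon})$ does not improve for richer lines, so the level-set decomposition buys nothing here (it pays off only for bounds such as Szemer\'edi--Trotter that decay in $k$). Your check that $v \mapsto \ell_v^\beta$ is injective is exactly the paper's Lemma \ref{thm:distinctLines} and is indeed the one point needed to justify $|L_\beta| \leq n$ before invoking the incidence theorem.
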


For $P \subset \mathbb{F}^d$ with $d>2$, there is no upper bound of the form $|\Pi_{\alpha,\beta}| \leq o(n^3)$ that holds for an arbitrary set of $n$ points.
For example, let $A \subset \mathbb{R}$ with $|A| = n/2$, and let $P$ be all points with coordinates $(a,0,\beta)$ or $(0,a,1)$, for $a \in A$ and $\beta \in \mathbb{R} \setminus \{0\}$.
If $p \in P$ has the form $(a,0,\beta)$ and $q,r \in P$ have the form $(0,a,1)$, then $(p,q,r) \in \Pi_{\beta,\beta}$.
Since we have $n/2$ choices for $p$, and $(n/2)^2$ choices for $(q,r)$, we have that $|\Pi_{\beta,\beta}| \geq \Omega(n^3)$.

It is possible to obtain a nontrivial upper bound on $|\Pi_{\alpha,\beta}|$ for $P \subset \mathbb{F}^d$ by restricting the maximum possible number of points on a hyperplane.
We obtain a more refined bound by further restricting the number of points on a $(d-2)$-plane\footnote{We refer to a $k$ dimensional affine subspace as a $k$-plane. A hyperplane in $\mathbb{F}^d$ is a $(d-1)$-plane.}.

\begin{theorem}\label{thm:generalHighDim}
Let $P$ be a set of $n$ points in $\mathbb{F}^d$, such that no $s$ points of $P$ are contained in any single hyperplane, and such that no $t$ points of $P$ are contained in any single $(d-2)$-plane.
Let $\alpha,\beta \in \mathbb{F} \setminus \{0\}$.
Then,
\[|\Pi_{\alpha,\beta}| \leq \min(2s^2n,  O(tn^2)).\]
\end{theorem}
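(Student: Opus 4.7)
The plan is to establish the two bounds $2s^{2}n$ and $O(tn^{2})$ separately, by summing over the triples in $\Pi_{\alpha,\beta}$ in two different orders.

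For $|\Pi_{\alpha,\beta}| < 2s^{2}n$, I would fix $p$ first. For each $p \in P$, the set $\{q \in P : p \cdot q = \alpha\}$ is the intersection of $P$ with the affine locus $\{x : p \cdot x = \alpha\}$. If $p = 0$ this locus is empty (since $\alpha \neq 0$), and if $p \neq 0$ it is a genuine affine hyperplane, which contains at most $s-1$ points of $P$ by the first hypothesis. The same bound applies to $\{r \in P : p \cdot r = \beta\}$. Summing the product over $p$ gives
\[|\Pi_{\alpha,\beta}| = \sum_{p \in P}|\{q \in P : p \cdot q = \alpha\}| \cdot |\{r \in P : p \cdot r = \beta\}| \leq n(s-1)^{2} < 2 s^{2} n.\]

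For $|\Pi_{\alpha,\beta}| \leq O(tn^{2})$, I would swap the order of summation and fix the pair $(q,r)$ first. The admissible $p$ satisfy $p \cdot q = \alpha$ and $p \cdot r = \beta$, so they lie on the intersection of two affine hyperplanes, and I split by whether $q$ and $r$ are linearly independent. If $q$ and $r$ are linearly independent, this intersection is a proper affine $(d-2)$-plane, which contains at most $t-1$ points of $P$ by the second hypothesis; there are at most $n^{2}$ such pairs, contributing at most $(t-1)n^{2}$ in total. If $q$ and $r$ are linearly dependent (both necessarily nonzero, else no $p$ satisfies the constraint), write $r = \lambda q$; the two equations are consistent only when $\lambda = \beta/\alpha$, so each $q \in P$ is paired with at most one such $r \in P$, and the constraint on $p$ collapses to the single hyperplane $\{x : x \cdot q = \alpha\}$, containing at most $s - 1 \leq n$ points of $P$. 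This degenerate case therefore contributes at most $n \cdot n = n^{2}$. Summing yields $|\Pi_{\alpha,\beta}| \leq (t-1)n^{2} + n^{2} = O(tn^{2})$.

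The argument is essentially bookkeeping. The one step worth flagging is handling the degenerate case $q \parallel r$ in the second bound, where the intersection of the two hyperplanes collapses from a $(d-2)$-plane to a hyperplane and the $(d-2)$-plane hypothesis cannot be invoked; there I must fall back on the hyperplane hypothesis, and use the trivial bound $s - 1 \leq n$ to absorb the resulting contribution into the $O(tn^{2})$ main term.
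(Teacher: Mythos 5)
Your proof is correct, but it takes a genuinely different route from the paper. The paper derives the $O(tn^2)$ branch from its general incidence framework: it reduces $|\Pi_{\alpha,\beta}|$ to $2\sum_{k<s}k^2 f_{=k}$ for the family of hyperplanes $h_\gamma(p)$, observes (Lemma \ref{thm:noK2t}) that the $(d-2)$-plane hypothesis makes the point--hyperplane incidence graph $K_{2,t}$-free, and then applies the K\H{o}v\'ari--S\'os--Tur\'an bound to control the number of $k$-rich hyperplanes. You instead dualize and double count directly: for a fixed pair $(q,r)$ the admissible $p$ lie on $\{x : x\cdot q=\alpha\}\cap\{x : x\cdot r=\beta\}$, which is a $(d-2)$-plane when $q,r$ are independent and hence carries at most $t-1$ points of $P$. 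Your handling of the degenerate parallel case is the right fix (only $r=(\beta/\alpha)q$ gives a consistent system, and that pair contributes at most the number of points of $P$ on a single hyperplane), though the honest bound there is $\min(s-1,n)\le n$ rather than ``$s-1\le n$'', since nothing in the hypotheses forces $s\le n+1$. Your argument is more elementary and yields a clean explicit constant, $|\Pi_{\alpha,\beta}|\le tn^2$ once the degenerate term is absorbed, essentially reproving the relevant case of K\H{o}v\'ari--S\'os--Tur\'an in situ; what the paper's detour through the general framework buys is modularity, since swapping in stronger incidence bounds (Rudnev's theorem, the Lund--Sheffer--de Zeeuw bound) in the same slot yields Theorems \ref{thm:F3Rudnev} and \ref{thm:Rd}. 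Your first branch, $n(s-1)^2<2s^2n$ by fixing $p$, is the same argument as the paper's Corollary \ref{thm:s2n}.
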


Up to constant factors, Theorem \ref{thm:generalHighDim} is a generalization of Theorem \ref{thm:generalPlane}; since the points of $P$ are distinct, $t=1$ in $\mathbb{F}^2$.

Rudnev \cite{rudnev2014number}  proved an upper bound on the number of incidences between points and planes in $\mathbb{F}^3$ that holds for an arbitrary field $\mathbb{F}$ with characteristic other than $2$.
In the case of positive characteristic, application of Rudnev's bound requires that there are not too many points.
Applying Rudnev's bound in the framework of this paper gives

\begin{theorem}\label{thm:F3Rudnev}
Let $P$ be a set of $n$ points in $\mathbb{F}^3$, such that no $s$ points of $P$ are coplanar, and no $t$ points of $P$ are collinear.
If $\mathbb{F}^3$ has positive characteristic $p$, then $p \neq 2$ and $n=O(p^2)$.
Then,
\[|\Pi_{\alpha,\beta}| \leq  O(n^2 \log (sn^{-1/2}) + stn).\]
\end{theorem}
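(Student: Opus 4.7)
The plan is to translate the problem into a point--plane incidence question in $\mathbb{F}^3$ and apply Rudnev's bound to the resulting configuration. For each $p \in P$ and $\gamma \in \{\alpha,\beta\}$, set $H_p^{(\gamma)} = \{x \in \mathbb{F}^3 : p \cdot x = \gamma\}$, and define $f(p) = |P \cap H_p^{(\alpha)}|$ and $g(p) = |P \cap H_p^{(\beta)}|$. Then $|\Pi_{\alpha,\beta}| = \sum_p f(p)g(p) \leq \tfrac{1}{2}\sum_p(f(p)^2 + g(p)^2)$ by AM--GM, so it suffices to bound $\sum_p f(p)^2$ (and, by symmetry, $\sum_p g(p)^2$) by $O(n^2\log(sn^{-1/2}) + stn)$.

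The main technical lemma is a level-set bound of the form $|\{p : f(p) \geq K\}| \leq O(n^2/K^2 + tn/K)$. To obtain this, apply Rudnev's incidence theorem to the $n$ points in $P$ and to the set of planes $\mathcal{H}_K = \{H_p^{(\alpha)} : f(p) \geq K\}$. Rudnev's non-degeneracy hypothesis requires control on the number of planes that share a common line; here, a line $L \subset \mathbb{F}^3$ is contained in $H_p^{(\alpha)}$ iff $p$ satisfies two linear equations, so the locus of such $p$ is itself a line in $\mathbb{F}^3$, and the assumption of at most $t$ collinear points of $P$ forces at most $t$ of the planes in $\mathcal{H}_K$ to share a common line. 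The theorem's hypotheses on the characteristic and on $n$ are exactly those that license Rudnev's bound, which gives $I(P, \mathcal{H}_K) \leq O(n\sqrt{m} + tn)$ where $m = |\mathcal{H}_K|$; combining with $Km \leq I(P, \mathcal{H}_K)$ and solving the resulting quadratic in $\sqrt m$ yields the level-set bound.

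A dyadic summation then delivers the theorem. Writing $\sum_p f(p)^2 \leq 4\sum_{j=0}^{\lceil \log_2 s\rceil} 4^j \bigl|\{p : f(p) \geq 2^j\}\bigr|$, one substitutes the minimum of the trivial bound $n$ and the level-set bound above. The two bounds meet at $2^j \approx \max(\sqrt n,\, t)$: summing below the crossover contributes $O(\max(n, t^2)\cdot n) = O(n^2 + stn)$, absorbing $t^2 n$ into $stn$ via $t \leq s$. Above the crossover there are at most $\log(s/\max(\sqrt n, t)) \leq \log(sn^{-1/2})$ dyadic levels; the $n^2/K^2$ contribution is $O(n^2)$ at each level, summing to $O(n^2\log(sn^{-1/2}))$, while the $tn/K$ contribution forms a geometric series summing to $O(stn)$.

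The main obstacle will be the case analysis in the dyadic sum, especially when $t > \sqrt{n}$, where the crossover shifts and one must verify that $\log(s/t) \leq \log(sn^{-1/2})$ to close the argument. A secondary bookkeeping point is to match Rudnev's hypothesis precisely by translating the assumption ``no $t$ collinear points of $P$'' into the ``at most $t$ planes through a common line'' non-degeneracy the theorem requires, using the linear-algebra observation above.
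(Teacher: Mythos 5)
Your proposal is correct and follows essentially the same route as the paper: dualize each point $p$ to the plane $\{x : p\cdot x = \gamma\}$, apply Rudnev's point--plane incidence bound to get the level-set estimate $|\{p : f(p)\ge K\}| \le O(n^2/K^2 + tn/K)$, and sum over scales up to $s$ (the paper sums by parts where you sum dyadically, which is equivalent). If anything, your explicit verification that at most $t$ of the dual planes pass through a common line --- because the corresponding points $p$ would be collinear in $P$ --- is more careful than the paper, which applies the incidence bound to the family of \emph{all} $k$-rich planes and leaves that non-degeneracy check implicit.
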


Theorem \ref{thm:F3Rudnev} gives a stronger conclusion than Theorem \ref{thm:generalHighDim} for $s\geq \Omega(n^{1/2})$.

The polynomial partitioning technique of Guth and Katz \cite{guth2015erdHos} has recently led to a number of higher-dimensional incidence bounds in $\mathbb{R}^d$.
Applying one such bound \cite{lund2014bisector}, we find

\begin{theorem}\label{thm:Rd}
Let $P$ be a set of $n$ points in $\mathbb{R}^d$.
Suppose no more than $s$ points lie on any $(d-1)$-plane, and no more than $t$ points lie on any $(d-2)$-plane.
Then, for any $\epsilon > 0$,
\[|\Pi_{\alpha,\beta}| \leq O_{\epsilon,d}(nt^2 + n^{(4d-3)/(2d-1)+\epsilon}t^{(2d-2)/(2d-1)+ \epsilon} + sn).\]
\end{theorem}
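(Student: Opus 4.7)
The plan is to generalize the argument behind Theorem \ref{thm:R2}, replacing the Szemer\'edi--Trotter theorem with the higher-dimensional point--hyperplane incidence bound of \cite{lund2014bisector}. For each $p \in P$, set $H_p^\alpha = \{x \in \mathbb{R}^d : p \cdot x = \alpha\}$ and $H_p^\beta$ analogously, and write $a_p = |H_p^\alpha \cap P|$, $b_p = |H_p^\beta \cap P|$. Since $p \cdot q = \alpha$ iff $q \in H_p^\alpha$ iff $p \in H_q^\alpha$, we have
\[|\Pi_{\alpha,\beta}| = \sum_{p \in P} a_p \, b_p,\]
with $a_p, b_p \le s$ by hypothesis.

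Before invoking the incidence bound I would check its degeneracy hypothesis. For a $(d-2)$-plane $L$ and $\alpha \ne 0$, the condition $L \subseteq H_p^\alpha$ imposes $d-1$ independent linear constraints on $p$, so the set of such $p$ is an affine line. Hence if $k$ of the hyperplanes $\{H_p^\alpha\}_{p \in P}$ share a common $(d-2)$-plane, the corresponding $k$ points of $P$ are collinear; since any line lies in some $(d-2)$-plane when $d \ge 3$, the hypothesis forces $k \le t$. Dually, no $(d-2)$-plane contains more than $t$ points of $P$, and the same analysis applies to $\{H_p^\beta\}$. The bound of \cite{lund2014bisector} therefore applies, and its standard popularity corollary controls the number of hyperplanes $H_p^\alpha$ meeting $P$ in at least $r$ points.

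The remainder is a dyadic decomposition. Let $P_i = \{p \in P : 2^i \le a_p < 2^{i+1}\}$ for $0 \le i \le \log_2 s$; the popularity corollary bounds $|P_i|$, and
\[|\Pi_{\alpha,\beta}| \le \sum_i 2^{i+1}\, I\bigl(P,\, \{H_p^\beta : p \in P_i\}\bigr),\]
where each inner incidence count is controlled by feeding $n$ points and $|P_i|$ hyperplanes back into the bound of \cite{lund2014bisector}. The main obstacle will be balancing this dyadic sum: splitting the range of $i$ at thresholds where the different terms of the incidence bound dominate should produce the three terms in the stated bound, with the $\epsilon$ losses inherited directly from the incidence bound.
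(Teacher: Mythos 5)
Your proposal is correct in outline and rests on the same key ingredient as the paper's proof, namely the $K_{2,t}$-free point--hyperplane incidence bound of Lund, Sheffer, and de Zeeuw (Lemma \ref{thm:LSZ}), but the bookkeeping is genuinely different. The paper first decouples the two dot products via $a_pb_p\le a_p^2+b_p^2$, so it only needs the weight distribution of a single family of hyperplanes; it then writes $\sum_p wt(h_\gamma(p))^2=\sum_k k^2f_{=k}$, majorizes the tail counts by the rich-hyperplane bound $g_k\le O((n^dt^{d-1}/k^{2d-1})^{1+\epsilon'}+n/k)$, and sums by parts with the cutoff $r=\max(t,(nt)^{(1+2\epsilon')(d-1)/(2d-1)})$. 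You instead keep the product, decompose dyadically in $a_p$, and apply the incidence bound twice: once as a popularity corollary to bound $|P_i|$, and once more to bound $I(P,\{H_p^\beta\}_{p\in P_i})$. The balancing you defer does close: the dominant term of your dyadic sum is $\sum_{2^i\ge r}2^{-i(2d-3)}n^dt^{d-1}$ up to $\epsilon$-losses, which telescopes to $n^dt^{d-1}r^{-(2d-3)}=n^{(4d-3)/(2d-1)}t^{(2d-2)/(2d-1)}$ at $r=(nt)^{(d-1)/(2d-1)}$; the $t|P_i|$ contributions sum to $O(t\cdot I(P,H^\alpha))\le O((nt)^{(3d-2)/(2d-1)+\epsilon}+nt^2)$, whose first piece is the geometric mean of $nt^2$ and the main term and is therefore absorbed; and the additive $n$ term sums to $O(sn)$. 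Two small remarks: the only degeneracy hypothesis Lemma \ref{thm:LSZ} requires is $K_{2,t}$-freeness of $G(H,P)$, which follows immediately because two distinct hyperplanes meet in a $(d-2)$-plane containing fewer than $t$ points (the paper's Lemma \ref{thm:noK2t}); your check that few of the $H_p^\alpha$ share a common $(d-2)$-plane is the dual condition and, while correct, is not needed. You should also record that distinct $p$ yield distinct hyperplanes $H_p^\alpha$ (Lemma \ref{thm:distinctLines}), so that $|P_i|$ genuinely counts distinct $2^i$-rich hyperplanes. The paper's AM--GM decoupling avoids the second application of the incidence bound and is somewhat cleaner, but both routes yield the stated exponents.
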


Theorem \ref{thm:Rd} falls short of being a generalization of Theorem \ref{thm:R2} in two respects.
First, it is only proved in real space, not complex space.
Second, it is weaker by an arbitrarily small polynomial factor $(nt)^\epsilon$.
Both of these limitations are inherited from the incidence bound used, and could conceivably be removed by future developments in the science of proving incidence bounds.

\section{Constructions}\label{sec:constructions}

In this section, we describe two infinite families of sets of points in $\mathbb{R}^2$.
The first familiy is relatively simple, and shows that a set of $n$ points such that no line contains $s$ of the points can have $|\Pi_{\alpha,\beta}| \geq ns$.
This shows that Theorem \ref{thm:R2} is tight for $s \geq n^{2/3}$.

The second construction shows that that Corollary \ref{thm:density} is tight.
This also implies that Theorem \ref{thm:R2} is tight for $s \geq n^{2/3}$, but says nothing for $s < n^{1/2}$.
The second construction is also slightly more complicated than the first, and results in fewer pairs of dot products by a constant factor.

In both this section and Section \ref{sec:proofs}, we will need the following observation.

\begin{lemma}\label{thm:distinctLines}
Let $p,q \in \mathbb{F}^d$ and $\alpha \in \mathbb{F} \setminus \{0\}$.
The set of points $\ell_p = \{r \mid r \cdot p = \alpha\}$ is a hyperplane.
If $p,q$ are distinct, then $\ell_p, \ell_q$ are distinct.
\end{lemma}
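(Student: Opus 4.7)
The plan is to split the lemma into its two assertions and dispatch each by elementary linear algebra. For the first, $\ell_p$ is defined as the solution set of the single linear equation $r_1 p_1 + \cdots + r_d p_d = \alpha$ in the coordinates of $r$. Provided $p \neq 0$ (which is the only case of interest, since $\alpha \neq 0$ forces $\ell_p = \emptyset$ when $p = 0$, and in the intended applications $p$ will be a point satisfying $p \cdot q = \alpha$ for some $q$, precluding $p = 0$), this equation is nondegenerate, so its solution set is an affine $(d-1)$-dimensional subspace, i.e., a hyperplane.

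For the distinctness claim, I would argue by contraposition: assume $\ell_p = \ell_q$ and deduce $p = q$. The hyperplane $\ell_p$ is nonempty by the first part, so fix any $r_0 \in \ell_p = \ell_q$. The translate $\ell_p - r_0$ is simultaneously the kernel of the linear functional $r \mapsto r \cdot p$ and of $r \mapsto r \cdot q$. A standard linear-algebra fact — valid over any field $\mathbb{F}$ — says that two linear functionals on $\mathbb{F}^d$ sharing a common kernel must be scalar multiples of one another, so $p = \lambda q$ for some $\lambda \in \mathbb{F} \setminus \{0\}$. Substituting into $r_0 \cdot p = r_0 \cdot q = \alpha$ yields $\lambda \alpha = \alpha$, and since $\alpha \neq 0$ we conclude $\lambda = 1$, hence $p = q$.

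There is no substantial obstacle in this argument, as it is pure linear algebra, but it is worth highlighting that the hypothesis $\alpha \neq 0$ is what makes the second claim work: were $\alpha = 0$, then $\ell_p$ would be a linear hyperplane through the origin, and any nonzero scalar multiple of $p$ would define the same $\ell_p$, so distinctness of $p$ and $q$ would no longer imply distinctness of the associated hyperplanes.
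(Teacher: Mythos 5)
Your proof is correct and follows essentially the same route as the paper's: both reduce the distinctness claim to the fact that equal hyperplanes have proportional defining linear forms (you derive this via the common-kernel argument, the paper asserts it directly for the augmented coefficient vectors $(p_1,\ldots,p_d,\alpha)$), and then use $\alpha \neq 0$ to force the proportionality constant to be $1$. Your side remark that $p = 0$ must be excluded for $\ell_p$ to be a hyperplane is a small point of rigor that the paper's proof glosses over, and is harmless in all of the lemma's applications.
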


\begin{proof}
Write $p = (p_1, \ldots, p_d), r = (r_1, \ldots, r_d), q = (q_1, \ldots, q_d)$.
The set of points $\ell_p$ satisfies the linear equation
\[p_1 r_1 + \ldots + p_d r_d = \alpha,\]
and so is a hyperplane.
Suppose $\ell_p = \ell_q$.
Then there is some $\beta$ such that $\beta(p_1, \ldots, p_d, \alpha) = (q_1, \ldots, q_d, \alpha)$.
Since $\alpha \neq 0$, we must have $\beta = 1$, so $p = q$.
\end{proof}

\subsection{Simple construction for Theorem \ref{thm:R2}}

For given positive integers $n,s$ such that $n/s$ is an integer, we construct a set $P \subset \mathbb{R}^2$ of $n$ points, no $s$ collinear, such that $|\Pi_{1,1}| \geq n(s-1)^2/s$.

Let $Q = \{q_1, q_2, \ldots, q_{n/s}\}$ be a set of points such that no three are on a line, and such that $q_i \neq (0,0)$ for all $i$.
For $i \in [1,n/s]$, let $\ell_i = \{p \mid p \cdot q_i = 1\}$.
Let $R_i$ be a set of $s-1$ points such that, if $r \in R_i$, then $r \in \ell_i$, and no $s$ points of $P = Q \cup R_1 \cup R_2 \cup \ldots \cup R_{n/s}$ are collinear.
Note that $|P| = n$.
For each pair $(r_{ij},r_{ik}) \in R_i$, we have $(q_i, r_{ij}, r_{ik}) \in \Pi_{1,1}$.
Since there are $n/s$ choices for $q_i$ and $(s-1)^2$ choices for $(r_{ij},r_{ik})$, we have that $|\Pi_{1,1}| \geq n(s-1)^2/s$.

\subsection{Construction for Corollary \ref{thm:density}}

For given $n$ and $\epsilon < n^{-1/2}/3$ satisfying certain divisibility conditions, we construct a set $P \subset [0,1]^2$ of $n + 3n\epsilon = (1 + o(1))n$ points such that the distance between each pair of points in $P$ is at least $\epsilon$, such that $|\Pi_{1/2,1/2}| \geq \Omega(n \epsilon^{-1})$.

Let $L = \{\ell_1, \ell_2, \ldots, \ell_{3n\epsilon}\}$ be a set of lines such that $\ell_j$ contains the points $(0,1)$ and $(1, 1-3\epsilon j)$.
Note that, since $\epsilon < n^{-1/2}/3$, each line of $L$ has positive $y$-coordinate for all $x \in [0,1]$.
Let $Q_i$ for $i \in [1,3n\epsilon]$ be a set $\epsilon^{-1}/3$ points such that each point of $Q_i$ is incident to $\ell_i$, has $x$-coordinate in the interval $[2/3,1]$, and the difference between the $x$-coordinate of each pair of points in $Q_i$ is at least $\epsilon$.
Note that the distance between $\ell_i$ and $\ell_{i+1}$ at $x=2/3$ is $(2/3)3 \epsilon > \epsilon$, so the distance bewteen points in $Q_i$ and $Q_{i+1}$ is at least $\epsilon$.
Let $R = \{r_1, r_2, \ldots, r_{3n\epsilon}\}$ be the set of points such that the coordinates of $r_j$ are $(3 \epsilon j / 2, 1/2)$.
Note that the distance between each pair of points in $R$ is at least $3 \epsilon /2$, and all points of $R$ have $x$-coordinate at most $9n\epsilon^2/2 < 1/2$.
Hence, $P = Q_1 \cup Q_2 \cup \ldots \cup Q_{3n\epsilon} \cup R$ is a set of $(1 + o(1))n$ points such that the distance between each pair of points in $P$ is at least $\epsilon$.

Let $q \in Q_j$ with coordinates $(\lambda, 1-3\lambda\epsilon j)$ for some $\lambda \in [2/3,1]$.
Then $r_j \cdot q = \lambda 3 \epsilon j / 2 + (1 - \lambda 3 \epsilon j) / 2 = 1/2$.
Hence, for $q_{j1}, q_{j2} \in Q_i$, we have $(r_j, q_{j1}, q_{j2}) \in \Pi_{1/2,1/2}$.
Since there are $3n\epsilon$ choices for $r_j$ and $\epsilon^{-2}/9$ choices for $(q_{j1},q_{j2})$, we have $|\Pi_{1/2,1/2}| \geq n \epsilon^{-1}/3$.

\section{Proofs}\label{sec:proofs}

In this section, we prove the main theorems stated in Section \ref{sec:intro}.

The proofs all have the same basic outline.
First, we use a unified reduction from the question studied here to an incidence problem; this reduction is in Section \ref{sec:preliminaries}.
Section \ref{sec:preliminaries} also introduces notation that is used in the subsequent proofs.
Then, we apply known incidence bounds to obtain the concrete results listed in Section \ref{sec:intro}.
These six proofs are organized into two sections; Section \ref{sec:2dim} includes proofs of those bounds that are proved for point sets in a plane, and Section \ref{sec:ddim} has the proofs for bounds in higher dimensions.

\subsection{From Pairs of Dot Products to Incidences}\label{sec:preliminaries}

Suppose we are given a finite point set $P \subset \mathbb{F}^d$ such that no hyperplane contains $s$ points of $P$, and constants $\alpha,\beta \in \mathbb{F} \setminus \{0\}$.

For any hyperplane $h$, denote
\[wt(h) = |h \cap P|.\]

For any point $p \in P$ and constant $c \in \mathbb{F}$, denote
\begin{align*}
h_c(p) &= \{x \in \mathbb{F}^d \mid p \cdot x = c\}, \\
\pi(p) &= \{(q,r) \in P \times P \mid p \cdot q = \alpha, p \cdot r = \beta \}.
\end{align*}

Note that $|\Pi_{\alpha,\beta}| = \sum_{p \in P} |\pi(p)|$, and $|\pi(p)| = wt(h_\alpha(p)) \cdot wt(h_\beta(p)) < wt(h_\alpha(p))^2 + wt(h_\beta(p))^2$.
Hence,
\[|\Pi_{\alpha,\beta}| < \sum_{p \in P} wt(h_\alpha(p))^2 +  \sum_{p \in P} wt(h_\beta(p))^2.\]

Let $\gamma = \argmax_{\gamma \in \{\alpha,\beta\}} \sum_{p \in P} wt(h_\gamma(p))^2$; we have
\[|\Pi_{\alpha,\beta}| < 2 \sum_{p \in P} wt(h_\gamma(p))^2.\]

Let
\[H = \{h_\gamma(p) \mid p \in P\}.\]
Since $\gamma \neq 0$, if $p \neq p'$ then $h_\gamma(p) \neq h_\gamma(p')$, so $|H| = n$.

Denote
\begin{align*}
f_{k} & = |\{h \in H \mid wt(h) \geq k\}|, \\
f_{=k} &= |\{h \in H \mid wt(h) = k\}|.
\end{align*}

Collecting hyperplanes of equal weight, we have
\[\sum_{p \in P} wt(h_\gamma(p))^2 = \sum_{k < s} f_{=k}k^2.\]

We have now established
\begin{lemma}\label{thm:incidence}
Let $P \subset \mathbb{F}^d$ be a finite set of points such that no hyperplane contains more than $s$ points of $P$, and let $\alpha,\beta \in \mathbb{F}\setminus \{0\}$.
Then,
\[|\Pi_{\alpha,\beta}| < 2 \sum_{k < s} f_{=k}k^2.\]
\end{lemma}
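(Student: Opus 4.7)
The plan is to unpack the definitions of $\pi(p)$ and $f_{=k}$ and assemble the chain of identities and inequalities developed in the preceding paragraphs; the lemma is essentially a summary of the reduction, so the proof is a matter of verifying each step.

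First, I would use the identity $|\Pi_{\alpha,\beta}| = \sum_{p \in P} |\pi(p)|$, which is immediate from the definitions of $\Pi_{\alpha,\beta}$ and $\pi(p)$. For each fixed $p$, the conditions $p \cdot q = \alpha$ and $p \cdot r = \beta$ decouple, so that $q$ and $r$ independently range over $P \cap h_\alpha(p)$ and $P \cap h_\beta(p)$ respectively; this gives $|\pi(p)| = wt(h_\alpha(p)) \cdot wt(h_\beta(p))$. Applying the crude bound $xy < x^2 + y^2$ (valid whenever $(x,y) \neq (0,0)$), summing over $p$, and replacing the two resulting sums by twice the larger one gives
\[ |\Pi_{\alpha,\beta}| < \sum_{p \in P} wt(h_\alpha(p))^2 + \sum_{p \in P} wt(h_\beta(p))^2 \le 2 \sum_{p \in P} wt(h_\gamma(p))^2, \]
where $\gamma \in \{\alpha,\beta\}$ is the argmax of the two sums of squares.

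Next, I would appeal to Lemma~\ref{thm:distinctLines} to conclude that, because $\gamma \neq 0$, the map $p \mapsto h_\gamma(p)$ is injective on $P$; hence $|H| = n$ and the $p$-indexed sum equals $\sum_{h \in H} wt(h)^2$. Partitioning $H$ by weight then gives $\sum_{h \in H} wt(h)^2 = \sum_{k \ge 1} f_{=k}\, k^2$, and the hypothesis that no hyperplane contains $s$ or more points of $P$ restricts the outer index to $k < s$, yielding the stated bound.

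There is no substantive obstacle here; every identity in use is either a definition, an elementary arithmetic inequality, or an instance of Lemma~\ref{thm:distinctLines}. The only minor point needing attention is the degenerate case in which $wt(h_\gamma(p)) = 0$ for every $p$, in which case $\Pi_{\alpha,\beta} = \emptyset$ and the strict inequality $xy < x^2 + y^2$ fails; but then both sides of the stated bound are $0$ and the conclusion holds in the appropriate vacuous sense.
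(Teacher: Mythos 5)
Your proof is correct and follows essentially the same route as the paper: decompose $|\Pi_{\alpha,\beta}|$ as $\sum_p |\pi(p)|$, bound the product of weights by the sum of squares, pass to the larger of the two sums via the argmax $\gamma$, use the injectivity of $p \mapsto h_\gamma(p)$ (Lemma~\ref{thm:distinctLines}) to identify the sum over $P$ with a sum over $H$, and collect hyperplanes by weight. Your remark about the degenerate all-zero-weight case is a fair observation about the strictness of the inequality that the paper itself glosses over.
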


Since $|H|=n$, we have as an immediate corollary to Lemma \ref{thm:incidence}

\begin{corollary}\label{thm:s2n}
Under the hypotheses of Lemma \ref{thm:incidence}, we have
\[|\Pi_{\alpha,\beta}| < 2 s^2 n.\]
\end{corollary}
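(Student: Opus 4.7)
The plan is to extract Corollary \ref{thm:s2n} directly from Lemma \ref{thm:incidence} by a crude, term-by-term bound. Starting from the conclusion $|\Pi_{\alpha,\beta}| < 2 \sum_{k < s} f_{=k} k^2$, I would bound $k^2 \le (s-1)^2 < s^2$ uniformly for every $k$ in the range of summation, pulling the $s^2$ outside. This leaves the quantity $\sum_{k < s} f_{=k}$, which simply partitions the hyperplanes of $H$ by their weight.

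Since the hypothesis of the lemma says no hyperplane contains $s$ or more points of $P$, every hyperplane in $H$ has weight strictly less than $s$, so $\sum_{k < s} f_{=k} = |H|$. From the setup in Section \ref{sec:preliminaries}, $|H| = n$ because the map $p \mapsto h_\gamma(p)$ is injective (Lemma \ref{thm:distinctLines}, using $\gamma \neq 0$). Combining these two observations gives $|\Pi_{\alpha,\beta}| < 2 s^2 n$.

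There is no real obstacle here: the corollary is designed to be a one-line consequence, and all the substantive work has already been done in assembling the reduction to the incidence count and in proving Lemma \ref{thm:incidence}. The only thing to double-check is that the hyperplane count $|H| = n$ indeed equals the point count $n$, which is exactly what Lemma \ref{thm:distinctLines} was inserted earlier to guarantee.
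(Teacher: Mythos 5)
Your proof is correct and is exactly the argument the paper intends: bound $k^2 < s^2$ termwise and use $\sum_{k<s} f_{=k} = |H| = n$, which the paper compresses into the single remark ``Since $|H|=n$.'' Nothing further is needed.
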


Let $g_{k}$ be a monotonically decreasing function of $k$.
We claim that, if $f_{k} \leq g_{k}$ for all $k$, then 
$\sum_{k < s} k^2 f_{= k} = \sum_{k < s} k^2 (f_{k} - f_{k+1}) \leq \sum_{k < s}k^2 (g_k - g_{k+1})$.
The proof is by induction on the minimum index $j$ such that $f_{i} = g_i$ for all $i > j$.
In the base case, $f_{k} = g_k$ for all $k$.
If $f_{j} \neq g_j$, then $f_j < g_j$.
The function $f'$ such that $f'_k = f_k$ for $k \neq j$ and $f'_j = g_j$ has the property that $\sum_{k \leq s} k^2 (f_k - f_{k+1}) \leq \sum_{k \leq s}k^2(f'_k - f'_{k+1})$, and by induction $\sum_{k \leq s} k^2 (f'_k - f'_{k+1}) \leq \sum_{k \leq s}k^2 (g_k - g_{k+1})$.

Hence, we have

\begin{lemma}\label{thm:k-rich}
Let $P \subset \mathbb{F}^d$ be a finite set of points such that no hyperplane contains more than $s$ points of $P$, and let $\alpha, \beta \in \mathbb{F} \setminus \{0\}$.
Let $g_k$ be a monotonically decreasing function of $k$ such that $g_k \geq f_{k}$ for all $k$.
Then
\[|\Pi_{\alpha,\beta}| \leq 2 \sum_{k \leq s} k^2 \left (g_k - g_{k+1} \right ).\]
\end{lemma}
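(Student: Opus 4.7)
My plan is to derive the lemma directly from Lemma \ref{thm:incidence}, which, after substituting $f_{=k}=f_k-f_{k+1}$, already expresses the bound on $|\Pi_{\alpha,\beta}|$ as the telescoping sum $2\sum_{k<s} k^{2}(f_{k}-f_{k+1})$. All of the geometry is used up in that step; the rest of the lemma reduces to the purely combinatorial inequality
\[
\sum_{k<s}k^{2}(f_{k}-f_{k+1})\;\le\;\sum_{k\le s}k^{2}(g_{k}-g_{k+1}),
\]
valid whenever $g_k\ge f_k$ for every $k$ and $g$ is monotone decreasing.

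I would prove this inequality by induction, driving $f$ up to $g$ one coordinate at a time. Pick the largest index $j$ at which $f_{j}<g_{j}$, and replace $f$ by the sequence $f'$ that agrees with $f$ off of $j$ and satisfies $f'_{j}=g_{j}$. The only terms of the telescoping sum that change are the one at $k=j-1$, where $f_j$ appears with coefficient $-(j-1)^{2}$, and the one at $k=j$, where it appears with coefficient $+j^{2}$; so the net change is $(2j-1)(g_j-f_j)\ge 0$. Therefore the sum does not decrease, $f'$ still satisfies $f'\le g$, and $f'$ agrees with $g$ on one more index than $f$ did, so iterating finishes the induction, with the trivial base case $f=g$.

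The step I expect to require real care is the mismatch between the two ranges of summation: $k<s$ on the left and $k\le s$ on the right. For indices $j<s$ the substitution above is unaffected by this asymmetry, and for $j\ge s$ we have $f_j=0$ by the hypothesis of Lemma \ref{thm:incidence} that no hyperplane contains $s$ points, so we may without loss of generality assume $g_k=0$ for $k>s$; this preserves both monotonicity and the pointwise inequality $g_k\ge f_k$, and ensures that the extra term at $k=s$ on the right contributes $s^{2}g_{s}$ rather than being spoiled by a leftover $g_{s+1}$. Once this boundary bookkeeping is settled the combinatorial inequality closes, and combining with Lemma \ref{thm:incidence} gives the stated bound.
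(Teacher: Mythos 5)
Your proof is correct and follows essentially the same route as the paper: reduce everything to Lemma \ref{thm:incidence} and then establish the purely combinatorial exchange inequality by raising $f_j$ to $g_j$ at the largest index where they differ, where your explicit computation of the net change $\left(j^2-(j-1)^2\right)(g_j-f_j)=(2j-1)(g_j-f_j)\ge 0$ is exactly the step the paper asserts without writing out. Your attention to the $k<s$ versus $k\le s$ boundary addresses a point the paper glosses over; just note that setting $g_k=0$ for $k>s$ \emph{enlarges} the right-hand side rather than being a genuine ``without loss of generality,'' so strictly speaking it proves the lemma only for functions $g$ that vanish beyond $s$ \textemdash\ which is the case for the specific $g$ the paper actually uses, and is needed, since for a constant $g$ the stated right-hand side telescopes to zero.
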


In the proofs, we will use the following specific function $g_k$ with Lemma \ref{thm:k-rich}, which is related to the maximum number of hyperplanes that can contain at least $k$ of a set of $n$ points.

Let $\mathcal{H}$ be the set of all hyperplanes in $\mathbb{F}^d$, and denote
\begin{align*}
g'_k &= |\{h \in \mathcal{H} : wt(h) \geq k\}|, \\
g_k &=
\begin{cases}
g'_k, & \text{if } g'_k \leq n, \\
n, & \text{otherwise}.
\end{cases}
\end{align*}
Since each hyperplane that contributes to $f_k$ also contributes to $g_k$, we have that $g_k \geq f_k$.
In addition, $g_k$ is monotonically decreasing, and so satisfies the hypotheses of Lemma \ref{thm:k-rich}.

We denote 
\[g_{=k} = g_{k} - g_{k+1};\] in particular, this implies that $\sum_{k} g_{=k} \leq n$.

In the following proofs, we will often derive a bound on $g_k$ from some known bound on the quantity
\[I(P,H) = |\{(p,h) \in P \times H \mid p \in h\}|,\]
in which $P$ and $H$ may be taken to be arbitrary sets of points and hyperplanes, respectively.

\subsection{On a plane}\label{sec:2dim}

In this section, we prove Theorems \ref{thm:generalPlane}, \ref{thm:R2}, and \ref{thm:Fp2}.
These theorems are all for planar point sets, and are united by a common hypothesis (no $s$ points on any line).

\subsubsection{Theorem \ref{thm:generalPlane}}

The proof for arbitrary field $\mathbb{F}$ uses only the facts that each distinct pair of points lies on one line, each distinct pair of lines intersects in at most one point, and $g_k \leq n$ for all $k$.

\begin{proof}[Proof of Theorem \ref{thm:generalPlane}]
There are $k^2$ ordered pairs of (not necessarily distinct) points of $P$ on a line containing $k$ points of $P$, and $n^2$ such pairs in total.
Each distinct pair of points appears on one line, and each line crosses at a single point, so
\[\sum_{k\geq 2} k^2 g_{=k} \leq n^2 + \binom{g_2}{2} < 2n^2.\]
Combined with Lemma \ref{thm:incidence} and Corollary \ref{thm:s2n}, this completes the proof.
\end{proof}

\subsubsection{Theorem \ref{thm:R2}}

For $\mathbb{F} = \mathbb{R}$ or $\mathbb{C}$, we can use the Szemer\'edi-Trotter theorem, proved for $\mathbb{R}$ by Szemer\'edi and Trotter \cite{szemeredi1983extremal}.
Since the same bound was proved for $\mathbb{C}$ by T\'oth \cite{toth2015szemeredi}, and later by completely different methods by Zahl \cite{zahlszemeredi}, we have a unified proof for for $\mathbb{F}=\mathbb{R}$ and $\mathbb{C}$.

\begin{lemma}[Szemer\'edi-Trotter]\label{thm:ST}
For $\mathbb{F} = \mathbb{R}$ or $\mathbb{F} = \mathbb{C}$,
\[g_{k} \leq O(n^2 / k^3 + n/k).\]
\end{lemma}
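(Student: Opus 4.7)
The plan is to derive the lemma from the standard point--line form of the Szemer\'edi--Trotter theorem by the usual pigeonhole trick, and then invoke the cap $g_k \le n$ built into the definition of $g_k$ to handle the small-$k$ regime where the bound from Szemer\'edi--Trotter alone would be weak.

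First I would recall that the Szemer\'edi--Trotter theorem (in both $\mathbb{R}^2$ and $\mathbb{C}^2$, via \cite{szemeredi1983extremal, toth2015szemeredi, zahlszemeredi}) states that for any set $P$ of $n$ points and any set $L$ of $m$ lines in the plane, $I(P,L) \le C(n^{2/3}m^{2/3} + n + m)$ for an absolute constant $C$. Now fix $k \ge 2$ and let $L_k$ be the collection of lines in $\mathbb{F}^2$ containing at least $k$ points of $P$, so that $|L_k| = g'_k$. Each such line contributes at least $k$ incidences, so $k\,g'_k \le I(P,L_k) \le C\bigl(n^{2/3} (g'_k)^{2/3} + n + g'_k\bigr)$. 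I would then solve for $g'_k$: the term $C g'_k$ can be absorbed into the left-hand side once $k \ge 2C$ (a finite range of small $k$ is handled separately by the cap), giving
\[
\tfrac{k}{2}\,g'_k \le C\bigl(n^{2/3}(g'_k)^{2/3} + n\bigr),
\]
and separating the two terms on the right yields $g'_k \le O(n^2/k^3)$ from the first and $g'_k \le O(n/k)$ from the second, hence $g'_k \le O(n^2/k^3 + n/k)$.

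For the remaining small values of $k$ (including $k=1$ and any $k < 2C$), I would simply use the cap in the definition $g_k = \min(g'_k, n)$, so that $g_k \le n \le O(n/k)$ in that range. Combining both cases gives the stated bound on $g_k$ in the lemma.

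The argument is a textbook application of Szemer\'edi--Trotter, so there is no genuine obstacle; the only point that requires a little care is making sure the $Cm$ term in the incidence bound does not spoil the argument when $k$ is a small constant, which is precisely where the cap $g_k \le n$ is used.
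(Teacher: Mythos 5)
Your derivation is correct and is exactly the standard argument the paper implicitly relies on: the paper states this lemma as a known corollary of Szemer\'edi--Trotter (citing the real and complex versions) without writing out the pigeonhole step, and your handling of the $Cm$ term for large $k$ and the cap $g_k \le n$ for small $k$ fills in that routine derivation correctly.
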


\begin{proof}[Proof of Theorem \ref{thm:R2}]
Note that, since $\sum_k g_{=k} \leq n$,
\[\sum_{k \leq n^{1/3}} k^2 n \leq n^{5/3}.\]
Combining this with Lemmas \ref{thm:k-rich} and \ref{thm:ST},
\begin{align*}
|\Pi_{\alpha,\beta}| &\leq n^{5/3} + O \left( \sum_{n^{1/3} \leq k \leq s} (n^2 / k^2 + n) \right) \\
&\leq n^{5/3} + O(n^{5/3} + ns).
\end{align*}

\end{proof}

\subsubsection{Theorem \ref{thm:Fp2}}

In the case when $\mathbb{F}$ is a finite field with prime order, we can use an incidence theorem that was first proved by Bourgain, Katz, and Tao \cite{bourgain2004sum}  to obtain a slight improvement over Theorem \ref{thm:generalPlane} when $|P|$ is not too large.

\begin{lemma}\label{thm:BKT}
Let $P$ be a set of points and $L$ be a set of lines in $\mathbb{F}_p^2$ for prime $p$, with $|P|,|L| \leq N < p$.
Then there is a constant $\epsilon>0$ such that $I(P,L) \leq O(N^{3/2 - \epsilon})$.
\end{lemma}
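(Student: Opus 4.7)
This is precisely the Bourgain--Katz--Tao incidence theorem in $\mathbb{F}_p^2$, and the honest plan is to cite it verbatim from \cite{bourgain2004sum}, together with the quantitative refinements in \cite{jones2012further, roche2014new} that are already referenced in the sentence immediately preceding the statement. Since the lemma only asserts the existence of some $\epsilon > 0$, no additional massaging is needed --- the proof is a one-line pointer.

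If I had to reconstruct the argument from scratch, I would proceed as follows. Suppose for contradiction that $I(P,L) \geq C N^{3/2 + \delta}$ for a large constant $C$ and some $\delta > 0$. First, I would dyadically pigeonhole to extract a subcollection $L' \subseteq L$ of lines each incident to roughly $k$ points, carrying essentially all of the incidences. Next, using projective transformations of $\mathbb{F}_p^2$ to place the configuration in general position, a standard Cauchy--Schwarz count of collinear triples (or a Pl\"unnecke-type argument on the set of point-slope pairs) produces a subset $A \subseteq \mathbb{F}_p$ of size about $N^{1/2}$ satisfying simultaneously $|A+A| \leq |A|^{1+O(\delta)}$ and $|A\cdot A| \leq |A|^{1+O(\delta)}$.

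The final step is to invoke the sum--product estimate in $\mathbb{F}_p$: for any $A \subseteq \mathbb{F}_p$ with $|A| < p^{1-\eta}$, there is an absolute $\delta_0 > 0$ with $\max(|A+A|,\, |A\cdot A|) \geq |A|^{1 + \delta_0}$. Choosing $\delta$ small relative to $\delta_0$ contradicts the previous bounds and gives the desired incidence inequality with $\epsilon$ depending on $\delta_0$. The hypothesis $N < p$ is essential, since for $P = \mathbb{F}_p^2$ and $L$ the set of all lines one has $\Theta(p^3)$ incidences, violating any $N^{3/2-\epsilon}$ upper bound; it also prevents $A$ from being (a large fraction of) a subfield, which is the obstruction the sum--product theorem has to rule out.

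The main obstacle --- and the real content of the statement --- is the sum--product theorem itself, not its incidence consequence. The reduction from incidences to sum--product is relatively standard double counting; but the sum--product bound in $\mathbb{F}_p$ is a deep combinatorial result whose proof is iterative and intricate, which is why in this paper the sensible route is to quote the result as a black box.
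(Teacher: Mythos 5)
Your proposal matches the paper exactly: Lemma~\ref{thm:BKT} is quoted as a black box from \cite{bourgain2004sum} (with the refinements of \cite{jones2012further, roche2014new} noted), and no proof is given in the text. Your sketch of the underlying reduction to the sum--product theorem in $\mathbb{F}_p$ is a faithful summary of the original Bourgain--Katz--Tao argument, but it is not needed here; the one-line citation is the intended proof.
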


The value of $\epsilon$ in Lemma \ref{thm:BKT} was improved by Jones \cite{jones2012further}, and recent improvements to the sum-product theorem in $\mathbb{F}_p^2$ by Roche-Newton, Rudnev, and Shkredov \cite{roche2014new} give further improvements to $\epsilon$.

\begin{proof}[Proof of theorem \ref{thm:Fp2}]
Lemma \ref{thm:BKT} implies that
\[k g_{k} \leq n^{3/2 - \epsilon}.\]
Hence, by Lemma \ref{thm:k-rich},
\[|\Pi_{\alpha, \beta}| \leq 2\sum_{k \leq s} O(n^{3/2 - \epsilon}) \leq O(s n^{3/2 - \epsilon}). \]

\end{proof}

\subsection{Higher dimensions}\label{sec:ddim}

In this section, we prove Theorems \ref{thm:generalHighDim}, \ref{thm:F3Rudnev}, and \ref{thm:Rd}.
These theorems are for sets of points in some higher dimensional space, and are united by the hypotheses that no $s$ points are contained in a hyperplane and no $t$ points are contained in a $(d-2)$-plane.

Given a set $H$ of hyperplanes and a set $P$ of points, we define the incidence graph $G(H,P)$ to be the bipartite graph with left vertices corresponding to the hyperplanes of $H$, right vertices corresponding to the points of $P$, and $(h,p) \in E(G)$ if and only if $p \in h$.
We denote the complete bipartite graph with $s$ left and $t$ right vertices as $K_{s,t}$.

\begin{lemma}\label{thm:noK2t}
Suppose $H$ is a set of hyperplanes in $\mathbb{F}^d$, and $P$ is a set of points, such that no $t$ points are contained in any single $(d-2)$-plane.
Then the incidence graph $G(H,P)$ does not include $K_{2,t}$ as a subgraph.
\end{lemma}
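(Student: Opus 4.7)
The plan is to argue by contradiction: suppose $G(H,P)$ contains a copy of $K_{2,t}$. This gives two distinct hyperplanes $h_1, h_2 \in H$ together with $t$ distinct points $p_1, \ldots, p_t \in P$, with each $p_i$ lying on both $h_1$ and $h_2$. Then every $p_i$ belongs to the intersection $h_1 \cap h_2$, so it suffices to show that this intersection is contained in a single $(d-2)$-plane; this will contradict the hypothesis that no $(d-2)$-plane contains $t$ points of $P$.

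For the dimension computation, I would write $h_i = \{x \in \mathbb{F}^d : a_i \cdot x = b_i\}$ for $i = 1, 2$, where $a_1, a_2 \in \mathbb{F}^d$ and $b_1, b_2 \in \mathbb{F}$. Since $h_1 \neq h_2$, the pairs $(a_1, b_1)$ and $(a_2, b_2)$ are not scalar multiples of one another. Now there are two cases. If $a_1, a_2$ are linearly independent, the system of two linear equations has solution set an affine $(d-2)$-plane by elementary linear algebra. If instead $a_2 = \lambda a_1$ for some nonzero $\lambda \in \mathbb{F}$, then $b_2 \neq \lambda b_1$ (otherwise the hyperplanes would coincide), so the system has no solution, and $h_1 \cap h_2 = \emptyset$; in this situation the intersection is vacuously contained in any $(d-2)$-plane and the claimed contradiction is immediate.

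In the nonempty case, all $t$ points $p_1, \ldots, p_t$ lie on the $(d-2)$-plane $h_1 \cap h_2$, directly contradicting the assumption that no $(d-2)$-plane contains $t$ points of $P$. This yields the desired absence of $K_{2,t}$ in $G(H,P)$.

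There isn't really a hard step here; the argument is a short deduction from the affine dimension formula for the intersection of two distinct hyperplanes. The only subtlety worth flagging is to verify that \emph{distinct} hyperplanes in $\mathbb{F}^d$ intersect in an affine subspace of dimension exactly $d-2$ (when nonempty), a fact which holds over any field and does not require $\mathbb{F}$ to be ordered or algebraically closed.
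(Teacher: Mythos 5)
Your proof is correct and takes essentially the same approach as the paper: two distinct hyperplanes intersect in (at most) a $(d-2)$-plane, so a $K_{2,t}$ would force $t$ points of $P$ into one, contradicting the hypothesis. You are in fact slightly more careful than the paper's one-line proof, since you separately handle the case of parallel hyperplanes with empty intersection.
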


\begin{proof}
Since the hyperplanes of $H$ are distinct, the intersection of any two hyperplanes of $H$ is a $(d-2)$-plane, which does not contain $t$ points by hypothesis.
\end{proof}

\subsubsection{Theorem \ref{thm:generalHighDim}}

The classic bound of K\H{o}v\'ari, S\'os, and Tur\'an \cite{kovari1954problem} gives an upper bound on the number of edges in a $K_{2,t}$-free graph.

\begin{lemma}[K\H{o}v\'ari-S\'os-Tur\'an]\label{thm:KST}
Let $G$ be a $K_{2,t}$-free bipartite graph with $m$ left vertices and $n$ right vertices.
Then the number of edges of $G$ is at most $O(t^{1/2}mn^{1/2} + n)$.
\end{lemma}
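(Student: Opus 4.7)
The plan is to invoke the classical double-counting argument of K\H{o}v\'ari, S\'os, and Tur\'an. I would count in two different ways the quantity
\[C = \sum_{u} \binom{d(u)}{2},\]
where the sum is over right vertices $u$ and $d(u)$ denotes the degree of $u$. Combinatorially, $C$ is the number of ``cherries'' --- pairs consisting of a right vertex $u$ together with an unordered pair of left vertices both adjacent to $u$.

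For the upper bound on $C$, the $K_{2,t}$-free hypothesis implies that any two distinct left vertices share at most $t-1$ common right neighbors; otherwise those two left vertices together with $t$ common neighbors would form a copy of $K_{2,t}$. This yields $C \leq (t-1)\binom{m}{2} \leq t m^2/2$. For the lower bound, I would apply convexity of $\binom{x}{2}$ (equivalently, Jensen's inequality) to the right-vertex degree sequence, obtaining $C \geq n \binom{E/n}{2} = E(E-n)/(2n)$, where $E$ denotes the total number of edges.

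The remaining step is to combine the two bounds into the quadratic inequality $E^2 \leq t n m^2 + nE$ and solve for $E$. I do not expect any real obstacle here; the only mildly fiddly point is that the subtractive $-nE$ term is what eventually produces the additive $n$ in the conclusion. Splitting into the two regimes where the right-hand side is dominated by $tnm^2$ versus by $nE$ gives $E \leq O(\sqrt{t}\, m \sqrt{n})$ in the first case and $E \leq O(n)$ in the second, which together yield the claimed $O(t^{1/2} m n^{1/2} + n)$ bound.
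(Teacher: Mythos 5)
Your proof is correct: this is the standard K\H{o}v\'ari--S\'os--Tur\'an double-counting argument (bounding cherries from above via the $K_{2,t}$-free condition and from below via convexity of $\binom{x}{2}$), and the resulting inequality $E^2 \leq tnm^2 + nE$ does yield $E \leq O(t^{1/2}mn^{1/2} + n)$ exactly as you describe. The paper does not prove this lemma but simply cites it, and your argument matches the classical proof in the cited source.
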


From this, we can derive an upper bound on $|\Pi_{\alpha,\beta}|$ for a set of points in $\mathbb{F}^d$, for an arbitrary field $\mathbb{F}$.

\begin{proof}[Proof of Theorem \ref{thm:generalHighDim}]
Since no $t$ points of $P$ lie in any $(d-2)$-plane, Lemma \ref{thm:noK2t} implies that the incidence graph of $P$ with an arbitrary set of hyperplanes is $K_{2,t}$-free.
Hence, by Lemma \ref{thm:KST},
\[kg_k \leq O(t^{1/2}g_kn^{1/2} + n).\]
Hence, either $k \leq O(t^{1/2}n^{1/2})$, or $g_k \leq n/k$.
For $k \leq O(t^{1/2}n^{1/2})$, since $\sum_{k} g_{=k} \leq n$, we have
\[\sum_{k \leq t^{1/2}n^{1/2}} k^2 g_{=k} \leq tn^2.\]
When $s \geq O(t^{1/2}n^{1/2})$, by Lemma \ref{thm:k-rich} we have
\[|\Pi_{\alpha,\beta}| \leq O(tn^2) + \sum_{k \leq s} O(n) \leq O(tn^2),\]
since $s \leq n$.
The term $2s^2n$ in the bound comes from Corollary \ref{thm:s2n}.
\end{proof}

\subsubsection{Theorem \ref{thm:F3Rudnev}}

Rudnev gave an improvement to Lemma \ref{thm:KST} for incidences between points and planes in $\mathbb{R}^3$, under the condition that, if $\mathbb{F}$ has positive characteristic $p$, then the number of planes is $O(p^2)$.

\begin{lemma}\label{thm:rudnev}
Let $P,H$ be sets of points and planes, of cardinalities respectively $n$ and $m$, in $\mathbb{F}^3$.
Suppose $n \geq m$, and if $\mathbb{F}$ has positive characteristic $p$, then $p \neq 2$ and $m = O(p^2)$. 
Let $t$ be the maximum number of collinear planes.
Then,
\[I(P,H) \leq O(n \sqrt{m} + tn).\]
\end{lemma}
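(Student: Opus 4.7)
The plan is to reduce the point-plane incidence bound in $\mathbb{F}^3$ to a point-line incidence bound of Guth-Katz type, which extends to positive characteristic (via Koll\'ar) under a restriction on the number of lines -- hence the $m = O(p^2)$ hypothesis here. First, apply Cauchy-Schwarz to obtain
\[I(P,H)^2 \leq n \sum_{p \in P} r_p^2,\]
where $r_p = |\{h \in H : p \in h\}|$. The sum $\sum_p r_p^2$ equals $I(P,H)$ plus the number of ordered triples $(p, h, h')$ with $h \neq h'$ and $p \in h \cap h'$, so the essential task is to bound this triple count.

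Next, for each distinct pair of non-parallel planes in $H$, their intersection is a line in $\mathbb{F}^3$, and by hypothesis at most $t$ planes of $H$ contain any given line. Letting $L$ denote the set of such intersection lines and $m_\ell \leq t$ the number of planes of $H$ through $\ell$, the triple count equals $\sum_\ell m_\ell(m_\ell - 1)\, |\ell \cap P|$. Stratifying $L = L_2 \cup L_3 \cup \cdots \cup L_t$ by the multiplicity $m_\ell$, I would apply the Guth-Katz point-line incidence bound in $\mathbb{F}^3$ to each stratum and combine the estimates using the counting constraint $\sum_k k(k-1)|L_k| \leq m(m-1)$, recovering an inequality of the rough form
\[I(P,H)^2 \lesssim n \bigl( I(P,H) + t^2 \cdot (n^{1/2}|L|^{3/4} + n + |L|) \bigr),\]
from which, after substituting $|L| \leq m^2$ and solving, one should obtain $I(P,H) \lesssim n\sqrt{m} + tn$.

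The main obstacle will be the structural conditions in the Guth-Katz point-line bound, which require that not too many lines of $L$ lie in a common plane or on a common doubly-ruled quadric (regulus). The hypothesis on collinear planes controls lines through which many planes pass, but does not directly control coplanar or co-regular configurations of lines in $L$. Handling these contributions -- either by exploiting the fact that coplanar lines in $L$ arise from a single plane in a suitable dual picture, or by peeling them off and bounding them against the $tn$ error term directly -- will be the most delicate step. A secondary concern is that $|L|$ may approach $m^2 = O(p^4)$ in positive characteristic even though $m = O(p^2)$, so applying Koll\'ar's extension to each $L_k$ will require verifying the $|L_k| = O(p^2)$ regime stratum-by-stratum rather than for $L$ as a whole.
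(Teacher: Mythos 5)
First, a point of orientation: the paper does not prove this lemma at all --- it is quoted as an external result of Rudnev \cite{rudnev2014number} --- so your proposal has to be measured against Rudnev's actual argument, which is genuinely different from yours. Rudnev sends each point and each plane of $PG(3,\mathbb{F})$ to a plane in one of the two rulings of the Klein quadric in $PG(5,\mathbb{F})$, cuts with a generic subspace so that both families become lines in a three-dimensional projective space, and observes that $p\in h$ exactly when the two corresponding lines meet. The incidence count is then bounded \emph{linearly} (no squaring) by the number of pairs of intersecting lines, using the Guth--Katz theorem as extended by Koll\'ar to arbitrary fields (this is where $p\neq 2$ and $m=O(p^2)$ enter); splitting the $n$ point-lines into $n/m$ batches of size $m$ gives $(n/m)\cdot O(m^{3/2})=O(nm^{1/2})$, and the degenerate concurrent/coplanar configurations, which correspond to collinear points and collinear planes, account for the $tn$ term.

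Your route --- Cauchy--Schwarz followed by point--line incidences for the arrangement of intersection lines --- has a quantitative gap that I do not see how to repair. Carry out your own plan: with $L_k=\{\ell : m_\ell\approx k\}$ one has $|L_k|=O(m^2/k^2)$, and the dominant term of the point--line bound contributes $\sum_k k^2\cdot n^{1/2}|L_k|^{3/4}=O(n^{1/2}m^{3/2}t^{1/2})$ to the triple count, whence $I(P,H)=O(n^{3/4}m^{3/4}t^{1/4}+\cdots)$. This is \emph{not} dominated by $nm^{1/2}+tn$: take $n=m$ and $t=n^{1/2}$ and you get $n^{13/8}$ against the claimed $n^{3/2}$. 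The loss comes from squaring $I(P,H)$ before applying the geometry, and it is exactly the reason the Elekes--T\'oth-style arguments of this shape carry extra factors of the degeneracy parameter. Separately, the structural hypotheses of Guth--Katz are a genuine obstruction here, not a technicality: for a fixed plane $\pi$, every line of $L$ contained in $\pi$ has the form $\pi\cap h$ with $h\in H$, so a single stratum $L_k$ may contain up to $m$ coplanar lines, which can far exceed $|L_k|^{1/2}$, and the number of planes of $H$ through a common point (which governs concurrency of the lines of $L$) is not controlled by the collinear-planes hypothesis at all. If you want a self-contained proof, follow the Klein-quadric reduction; for the purposes of this paper, the lemma should simply be cited.
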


\begin{proof}[Proof of Theorem \ref{thm:F3Rudnev}]
Since the $g_k \leq n$,
\[\sum_{k < n^{1/2}} k^2 g_{=k} \leq n^2.\]

Lemma \ref{thm:rudnev} gives
\[kg_k \leq O(n \sqrt{g_k} + tn).\]

For $s \geq n^{1/2}$,
\begin{align*}
\sum_{k < s} k^2 (g_k - g_{k+1}) &\leq n^2 + \sum_{n^{1/2} < k < s} O(n^2/k + tn), \\
&\leq O(n^2 \log(sn^{-1/2}) + stn).
\end{align*}

Combined with Lemma \ref{thm:k-rich}, this finishes the proof.
\end{proof}

\subsubsection{Theorem \ref{thm:Rd}}

For $\mathbb{F} = \mathbb{R}$, we can use the following special case of an incidence bound of Lund, Sheffer, and de Zeeuw \cite{lund2014bisector}, based on the work of Fox, Pach, Sheffer, Suk, and Zahl \cite{fox2014semi}.
For the special case $d=3$, an earlier result of Basit and Sheffer would be sufficient for our purposes \cite{basit2014incidences}.

\begin{lemma}\label{thm:LSZ}
Let $H$ be a set of $m$ hyperplanes, and $P$ a set of $n$ points, both in $\mathbb{R}^d$, such that the incidence graph $G(H,P)$ is $K_{2,t}$-free.
Then, for any $\epsilon > 0$,
\[I(H,P) \leq O_{d,\epsilon}\left (m^{2(d-1)/(2d-1) + \epsilon} n^{d/(2d-1)} t^{(d-1)/(2d-1)} + tm + n \right).\]
\end{lemma}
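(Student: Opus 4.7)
The plan is to prove Lemma \ref{thm:LSZ} by polynomial partitioning, following the paradigm of Fox, Pach, Sheffer, Suk, and Zahl for semi-algebraic incidence bounds. The target exponent $m^{2(d-1)/(2d-1)} n^{d/(2d-1)} t^{(d-1)/(2d-1)}$ is exactly what comes out of balancing a ``cellular'' contribution against a contribution from the zero set of a partitioning polynomial, so the proof will revolve around choosing such a polynomial of the right degree and combining Lemma \ref{thm:KST} (as the base case) with an induction on dimension.

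First, I would invoke the Guth--Katz polynomial partitioning theorem to obtain a polynomial $f \in \mathbb{R}[x_1,\ldots,x_d]$ of degree $r$, to be optimized later, such that $\mathbb{R}^d \setminus Z(f)$ has $O(r^d)$ open cells and each cell contains at most $O(n/r^d)$ points of $P$. I would then split $I(H,P)$ into three parts: (i) incidences between points in open cells and hyperplanes not contained in $Z(f)$; (ii) incidences between points on $Z(f)$ and hyperplanes not contained in $Z(f)$; (iii) incidences between hyperplanes contained in $Z(f)$ and all of $P$. For (i), a hyperplane $h \not\subseteq Z(f)$ meets $Z(f)$ in a variety of degree at most $r$, so by a Bezout-type cell-crossing bound it enters $O(r^{d-1})$ cells. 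Writing $n_C \le n/r^d$ for the points in cell $C$ and $m_C$ for the hyperplanes crossing it, the $K_{2,t}$-freeness passes to each cell, and Lemma \ref{thm:KST} gives $O(t^{1/2} m_C n_C^{1/2} + n_C)$ incidences per cell; summing over cells using $\sum_C m_C \le O(m r^{d-1})$ yields a cellular bound of the rough shape $O(t^{1/2} r^{(d-2)/2} m n^{1/2} + n)$.

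For (ii) and (iii), I would argue inductively on the ambient dimension $d$. After decomposing $Z(f)$ into irreducible components and peeling off those that are hyperplanes (whose count is at most $r$, each contributing at most $n$ incidences, eventually contributing to the $tm+n$ additive terms), the remaining components are algebraic varieties of dimension $d-1$ and bounded degree; intersecting the hyperplanes of $H$ with them and applying a limiting or generic-position argument reduces the incidence problem to one in an effectively $(d-1)$-dimensional setting, where the inductive hypothesis applies with a slightly larger $\epsilon$. Balancing the cellular bound against the inductive contribution by choosing $r$ as an appropriate power of $m, n, t$ then produces the advertised exponent $m^{2(d-1)/(2d-1)+\epsilon} n^{d/(2d-1)} t^{(d-1)/(2d-1)}$.

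The main obstacle is controlling (ii) and (iii): hyperplanes fully contained in $Z(f)$ can a priori carry many incidences, and points on $Z(f)$ are not controlled by the partitioning at all. The technical device that unlocks this is the iterated, or ``two-level'', polynomial partitioning of Fox--Pach--Sheffer--Suk--Zahl, in which one repartitions the points on $Z(f)$ at a finer scale and recurses; the geometric loss at each iteration is what produces the arbitrarily small polynomial factor $(mn)^{\epsilon}$ in the exponent, and it is also why the bound does not sharpen to the clean form suggested by simple dimension counting. The terms $tm$ and $n$ in the final statement are there to absorb the degenerate regimes (very small $n$, or a single hyperplane containing many points) in which the partitioning optimization is vacuous and only the trivial count survives.
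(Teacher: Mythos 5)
The paper does not prove this lemma at all: it imports it as a special case of an incidence bound of Lund, Sheffer, and de Zeeuw \cite{lund2014bisector}, itself built on Fox, Pach, Sheffer, Suk, and Zahl \cite{fox2014semi}, so there is no internal proof to compare against. Your sketch correctly names the strategy used in those papers (polynomial partitioning, K\H{o}v\'ari--S\'os--Tur\'an inside the cells, induction to handle the zero set), but the central computation, as you have set it up, cannot produce the stated exponent, and this is not a matter of omitted detail. You partition the space containing the $n$ points, so that $n_C \leq n/r^d$ and $\sum_C m_C \leq O(mr^{d-1})$, and your own cellular bound comes out to $O(t^{1/2} m n^{1/2} r^{(d-2)/2} + n)$. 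For $d \geq 3$ this \emph{increases} with the partitioning degree $r$, so the optimal choice is $r = O(1)$, which merely returns Lemma \ref{thm:KST}; no balancing against the boundary contribution can then yield $m^{2(d-1)/(2d-1)} n^{d/(2d-1)} t^{(d-1)/(2d-1)}$.

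The partition must be applied on the other side: regard the $m$ hyperplanes as points of a dual $\mathbb{R}^d$ and partition them, so that each of the $O(r^d)$ cells contains $m_C \leq m/r^d$ hyperplanes while $\sum_C n_C \leq O(nr^{d-1})$. Applying Lemma \ref{thm:KST} per cell and Cauchy--Schwarz over the cells gives
\[
\sum_C t^{1/2} m_C n_C^{1/2} \;\leq\; t^{1/2}\,\frac{m}{r^d}\sum_C n_C^{1/2} \;\leq\; t^{1/2}\,\frac{m}{r^d}\,\bigl(O(r^d)\bigr)^{1/2}\Bigl(\sum_C n_C\Bigr)^{1/2} \;\leq\; O\bigl(t^{1/2} m n^{1/2} r^{-1/2}\bigr),
\]
so the cellular contribution $O(t^{1/2}mn^{1/2}r^{-1/2} + nr^{d-1})$ now decreases in its main term; balancing at $r = t^{1/(2d-1)} m^{2/(2d-1)} n^{-1/(2d-1)}$ yields exactly $m^{2(d-1)/(2d-1)} n^{d/(2d-1)} t^{(d-1)/(2d-1)}$. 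The issues you flag around $Z(f)$ (whence the $\epsilon$ loss) and the degenerate terms $tm + n$ are genuine, but they are downstream of getting the orientation of the partition right; also note that hyperplanes contained in $Z(f)$ as you handle them contribute $rn$, not $tm$. Since the paper treats the lemma as a black box, the appropriate course here is to cite \cite{lund2014bisector} rather than reprove it.
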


\begin{proof}[Proof of Theorem \ref{thm:Rd}]

By Lemma \ref{thm:LSZ},
\[kg_k \leq O_{d,\epsilon} \left (g_k^{2(d-1)/(2d-1) + \epsilon} n^{d/(2d-1)} t^{(d-1)/(2d-1)} + tg_k + n \right).\]

For $k \leq O(t)$, this is trivial.

Let $\epsilon' = (2d - 1) \epsilon / (1 - (2d-1)\epsilon)$ \textemdash \, note that $\epsilon'$ is a function of $\epsilon$ that increases monotonically for $\epsilon > 0$, and that $\lim_{\epsilon \rightarrow 0} \epsilon ' = 0$.

Let 
\[r = \max(t, (nt)^{(1+2\epsilon')(d-1)/(2d-1)}).\]
Since $g_k \leq n$,
\[\sum_{k \leq r} g_{=k}k^2 \leq nr^2.\]

For $k \geq \Omega(r)$, we have
\[g_k \leq O_{d,\epsilon} \left((n^{d}t^{d-1}/k^{2d-1})^{1+\epsilon'} + n/k \right). \]

Hence,
\begin{align*}
\sum_{k < s} k^2(g_k - g_{k+1}) &\leq O_{d,\epsilon} \left( nr^2 + \sum_{r < k < s} \left( (n^d t^{d-1}k^{-2d + 2})^{1+\epsilon'} + n \right) \right), \\
& \leq O_{d,\epsilon} \left( nr^2 + (n^d t^{d-1} r^{-2d + 3})^{1 + \epsilon'} + ns \right), \\
& \leq O_{d,\epsilon} \left( nt^2 + (n^{(4d-3)/(2d-1)}t^{(2d-2)/(2d-1)})^{1+2\epsilon'} + ns\right) .
\end{align*}

Applying Lemma \ref{thm:k-rich} completes the proof.

\end{proof}

\bibliographystyle{plain}
\bibliography{dotProducts}

\end{document}